\documentclass[a4paper,leqno,12pt,twoside]{amsart}
\usepackage{amsmath,amsfonts,amssymb,amsthm,amscd}
\usepackage[utf8]{inputenc}
\usepackage[english]{babel}
\usepackage[colorlinks=true,citecolor=blue, urlcolor=blue, linkcolor=blue,pagebackref]{hyperref}
\usepackage[top=1.2in,bottom=1.2in,left=1.in,right=1.in]{geometry} 
\usepackage{graphicx}
\usepackage{paralist}
\usepackage{tabto}
\usepackage{standalone}
\usepackage{tikz}
\usetikzlibrary{matrix}
\usetikzlibrary{arrows}
\usepackage{xfrac}

\usepackage[alphabetic,backrefs,lite]{amsrefs}
\usepackage[all]{xy}

\usepackage{enumerate}
\usepackage{xcolor}
\usepackage{aliascnt}

%----------------------------------------------------------- 

\theoremstyle{plain}
\newtheorem{thm}{Theorem}[section]

\newaliascnt{corIntr}{thmIntr}

\newaliascnt{lem}{thm}
\newtheorem{lem}[lem]{Lemma}
\aliascntresetthe{lem}

\newaliascnt{cor}{thm}

\aliascntresetthe{cor}

\newaliascnt{prop}{thm}
\newtheorem{prop}[prop]{Proposition}
\aliascntresetthe{prop}

\newaliascnt{quest}{thm}

\aliascntresetthe{quest}

\theoremstyle{definition}

\newaliascnt{rem}{thm}
\newtheorem{rem}[rem]{Remark}
\aliascntresetthe{rem}

\newaliascnt{defn}{thm}

\aliascntresetthe{defn}

\newaliascnt{ex}{thm}

\aliascntresetthe{ex}

\numberwithin{equation}{section}

%----------------------------------------------------------- 
%---mathbb

%---mathcal
\def\cA{\ensuremath{\mathcal{A}}}

%---tildes

%---overlines

%---specific

\newcommand{\lra}{\longrightarrow}

\makeatletter
\newcommand{\mylabel}[2]{#2\def\@currentlabel{#2}\label{#1}}
\makeatother

\newcommand\restr[2]{{% we make the whole thing an ordinary symbol
 \left.\kern-\nulldelimiterspace % automatically resize the bar with \right
 #1 % the function
 \vphantom{\big|} % pretend it's a little taller at normal size
 \right|_{#2} % this is the delimiter
 }}

%----------------------------------------------------------- 

\setlength{\parindent}{0pt}
\setlength{\parskip}{1.5mm}
\setcounter{secnumdepth}{2}

\title[Local geometry of (2,2)-3folds]{On the local geometry of the moduli space of $(2,2)$-threefolds in ${\mathcal A}_9$}

\author{E. Colombo} 

\address{Elisabetta Colombo \\ Universit\`a degli Studi di Pavia  \\ Dipartimento di Matematica \\ Via Cesare Saldini 50  \\   20133 Milano, Italy }
\email{elisabetta.colombo@unimi.it}

\author{P. Frediani}
\address{Paola Frediani  \\ Universit\`a degli Studi di Pavia  \\ Dipartimento di Matematica \\ Via Ferrata 1  \\ 27100 Pavia, Italy  }
 \email{paola.frediani@unipv.it}

\author{J.C. Naranjo$^{1,2}$}
 \address{Juan Carlos Naranjo \newline 1. Departament de Matem\`atiques i Inform\`atica,
Universitat de Barcelona, Gran Via de les Corts Catalanes, 585, 08007 Barcelona, Spain \newline 2. Centre de Recerca Matemàtica, Edifici C, Campus Bellaterra, 08193 Bellaterra, Spain }
 \email{jcnaranjo@ub.edu}

 \author{G.P. Pirola}
 \address{Gian Pietro Pirola  \\ Universit\`a degli Studi di Pavia  \\ Dipartimento di Matematica \\ Via Ferrata 1  \\ 27100 Pavia, Italy  }
 \email{gianpietro.pirola@unipv.it}

\date{}

\begin{document}

\begin{abstract}
We study  the local geometry of the moduli space of intermediate Jacobians of $(2,2)$-threefolds in ${\mathbb P}^2 \times {\mathbb P}^2$. 
More precisely, we prove that a composition  of the second fundamental form of the Siegel metric in $\mathcal A_9$ restricted to this moduli space, with a natural multiplication map is a nonzero holomorphic section of a vector bundle. We also describe its kernel. We use the two conic bundle structures of these threefolds, Prym theory, gaussian maps and Jacobian ideals.
\end{abstract}

\thanks{E. Colombo, P. Frediani and G.P. Pirola are members of GNSAGA (INdAM) and are partially supported by PRIN project Moduli spaces and Lie theory (2017) and by PRIN project Moduli spaces and special varieties (2022).  J.C. Naranjo is partially supported by the Spanish MINECO research project PID2019-104047GB-I00 and AGAUR project 2021 SGR 00697.}

\maketitle
\setcounter{tocdepth}{1}

\section{Introduction}
This paper deals with the local geometry of the moduli space of intermediate Jacobians of $(2,2)$-threefolds in ${\mathbb P}^2 \times {\mathbb P}^2$. These threefolds have the remarkable property of having two conic bundle structures. As pointed out by Verra in \cite{verra}, this translates into the fact that the intermediate Jacobian has two interpretations as the Prym variety of two admissible double covers of two plane sextics. In fact, using this, he proves that the restriction of the Prym map to the locus of double covers of plane sextics has degree two, thus giving a counterexample to the tetragonal  conjecture of Donagi (\cite{donagi}). 
He also proves a generic Torelli theorem for the period map of these threefolds in ${\mathcal A}_9$. 

Our aim is to investigate the local geometry of this  period map studying properties of  its second fundamental form. In fact, we consider ${\mathcal A}_9$ endowed with the orbifold K\"ahler metric induced by the symmetric metric on the Siegel space and we study the second fundamental form $II$ of the locus ${\mathcal Q}$ of intermediate Jacobians of these threefolds in  ${\mathcal A}_9$ with respect to this metric. 

One of the main difficulties is that  $II$ is non-holomorphic. However, the composition of this map with a convenient multiplication map turns out to be  holomorphic and can be described. This has been done in the case of the Jacobian and Prym loci in ${\mathcal A}_g$ (see \cite{cpt}, \cite{cfg}, \cite{cf_prym}), where this composition is given by the second gaussian map of either the canonical bundle, or the Prym-canonical one. These gaussian maps are generically of maximal rank. In contrast, an analogous composition of the second fundamental form of the moduli space of intermediate Jacobians of cubic threefolds in ${\mathcal A}_5$  with a natural multiplication map is shown to be zero in  \cite{cfnp_cubic}. 
In the case of cubic threefolds, an important tool was the relation between the Jacobian ring of the cubic threefold and that of the family of plane quintics provided by the Prym construction.

This suggested to address a similar problem to other families of intermediate Jacobians of Fano threefolds and ask  if a similar behaviour occurs, in particular for the ones that can be described in terms of Prym varieties of coverings of plane curves.

In this paper, we prove that the composition of the second fundamental form of the moduli space of intermediate Jacobians of $(2,2)$ threefolds in ${\mathcal A}_9$  with a natural multiplication map is  nonzero. More precisely, it gives a nonzero holomorphic section ${\mathcal S}$ of the normal bundle $N_{\mathcal Q /\mathcal A_9}$. 
Again, Jacobian rings of both, the threefold and the associated plane sextics, play a fundamental role. 

More precisely, for a $(2,2)$ threefold $T$ in ${\mathbb P}^2 \times {\mathbb P}^2$ there is a natural bigraded Jacobian ring $\oplus R_T^{a,b}$ such that the differential of the period map can be expressed in terms of this ring (see \cite{green}, \cite{green_LNM}). In particular, the dual of the the differential of the period map corresponds to the multiplication map
\[
 \nu: Sym^2 R^{1,1}_T \lra R^{2,2}_T.
\]
Hence, there is a natural isomorphism:
\[
N^*_{\mathcal Q /\mathcal A_9, JT}=Ker(Sym^2 R^{1,1}_T \stackrel {\nu}{\lra } R_T^{2,2}).
\]
So, the second fundamental form: 
$$II : N^*_{\mathcal Q /\mathcal A_9, JT} \lra Sym^2\Omega^1_{{\mathcal Q},JT}$$
can be seen as a map: 
$$Ker(Sym^2 R^{1,1}_T \stackrel {\nu}{\lra } R_T^{2,2}) \lra  Sym^2 R^{2,2}_T.$$

The main result of this paper is the following:
\begin{thm}\label{main_intro}
For a general $T$, the composition
$$m \circ II: N^*_{\mathcal Q /\mathcal A_9, JT}=Ker(Sym^2 R^{1,1}_T \stackrel{\nu}{\lra } R_T^{2,2}) \lra R^{4,4}_T\cong \mathbb C,$$
where $m: Sym^2 R^{2,2}_T  \lra R^{4,4}_T$ is the multiplication map, 
is nonzero and gives a canonical non-trivial section ${\mathcal S}$ of the normal bundle. 
As a consequence, the second fundamental form $II : N^*_{\mathcal Q /\mathcal A_9, JT} \lra Sym^2\Omega^1_{{\mathcal Q},JT}$ is nonzero. 
\end{thm}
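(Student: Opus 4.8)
The plan is to reduce the statement to an explicit computation in the bigraded Jacobian ring $R_T=\oplus R^{a,b}_T$, and then to certify the required non-vanishing by transporting the problem to the plane sextics attached to the two conic bundle structures of $T$. The first, and structurally decisive, step is to make the second fundamental form algebraic. The map $II$ is a priori only real-analytic, but its composition with the multiplication $m\colon Sym^2 R^{2,2}_T\to R^{4,4}_T$ is holomorphic; this is exactly the phenomenon exploited for the Jacobian and Prym loci in \cite{cpt}, \cite{cfg}, \cite{cf_prym} and for cubic threefolds in \cite{cfnp_cubic}, and it reflects the curvature of the Siegel metric on ${\mathcal A}_9$. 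Following that circle of ideas, I would establish a formula of the shape
\[
 m(II(Q)) = c\cdot \rho(Q), \qquad Q\in Ker\,\nu,
\]
where $c$ is a universal nonzero constant and $\rho\colon Ker\,\nu\to R^{4,4}_T\cong\mathbb C$ is a canonical second-gaussian--type map built solely out of the ring structure of $R_T$: the multiplication maps together with the Euler/contraction (second-order) operators coming from the Jacobian ideal of the defining $(2,2)$-form. This is where the Hodge-theoretic description of the differential of the period map via $R_T$ (Green's theory, \cite{green}, \cite{green_LNM}) enters, and it converts the analytic assertion into the purely algebraic one that $\rho\not\equiv 0$ for general $T$.

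The second step is to transfer $\rho$ to the plane sextics. I would invoke the two conic bundle structures and Verra's identification of $JT$ with the Prym variety of an admissible double cover of a plane sextic $C$ \cite{verra}. Under this correspondence the pieces $R^{1,1}_T$, $R^{2,2}_T$ and the codifferential $\nu$ should be matched with cohomological data on $C$, so that the space $Ker\,\nu$ acquires an explicit description in terms of the quadratic relations on the sextic side and $\rho$ becomes a gaussian map associated to the Jacobian ideal of $C$. The point of this reformulation is computational leverage: gaussian maps of plane curves and the Jacobian ring of a plane sextic are concretely accessible, whereas the ring $R_T$ of the threefold is not.

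The third step is the actual non-vanishing. I would exhibit a single element $Q_0\in Ker\,\nu$, for one sufficiently general (or conveniently symmetric, to ease the bookkeeping) threefold $T_0$, on which $\rho(Q_0)\neq 0$, using the explicit description of $Ker\,\nu$ and the ring multiplication on the sextic side and pairing the result correctly in $R^{4,4}_T$. Since $m\circ II$ is a holomorphic section of the normal bundle $N_{\mathcal Q/\mathcal A_9}$, its non-vanishing is an open condition on $\mathcal Q$, so non-vanishing at $T_0$ propagates to general $T$ and produces the canonical section $\mathcal S$. The concluding assertion, that $II$ itself is nonzero, is then immediate, since $m\circ II\neq 0$ forces $II\neq 0$.

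I expect the principal obstacle to be the non-vanishing in the third step: gaining sufficiently explicit control over both the kernel $Ker\,\nu$ and the ring multiplications to certify that $\rho$ does not vanish identically. Concretely, one must identify the relations cutting out $Ker\,\nu$ and evaluate the corresponding second-order pairings in the one-dimensional space $R^{4,4}_T$, and it is precisely the passage through the sextic via Prym theory — while keeping the two conic bundle structures and their Jacobian ideals compatible — that makes this tractable. A secondary, more formal difficulty lies in the first step: pinning down the precise constant and the compatibility of identifications so that the algebraic map $\rho$ genuinely computes $m\circ II$ rather than merely resembling it, in contrast with the cubic threefold case \cite{cfnp_cubic} where the analogous composition vanishes.
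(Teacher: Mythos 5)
Your second and third steps reproduce the paper's actual argument: the paper transfers the problem to the plane sextic $C$ attached to a conic bundle structure via Verra's Prym description, identifies the restriction of $m\circ II$ to $I_2(\omega_C\otimes\eta)\subset N^*_{\mathcal Q/\mathcal A_9,JT}$ with the projected second gaussian map $\tau\circ\mu_2$ of the Prym-canonical bundle, and then certifies non-vanishing on one symmetric example (Proposition \ref{cinghialino}): the threefold $F=6x_0^2y_0^2+2\lambda x_2^2y_0y_1+\dots$ with $\lambda^3=-108$, whose discriminant is the Fermat sextic, taking as test element the equation $F$ itself, viewed as a quadric through the Prym-canonical curve, and computing $\mu_2(F)=(18\cdot 36)\lambda^2x_0^4x_1^4x_2^4\notin J^{12}_C=\langle x_0^5,x_1^5,x_2^5\rangle$. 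Openness then gives the statement for general $T$, exactly as you propose.

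The genuine gap is in your first step, the one you call structurally decisive. You propose to prove holomorphicity of $m\circ II$ by exhibiting a formula $m(II(Q))=c\cdot\rho(Q)$ with $\rho$ built intrinsically out of the ring structure of $R_T$. No such intrinsic formula exists in the paper or in the cited literature: the results of \cite{cpt} and \cite{cf_prym} give formulas of this type for the Jacobian and Prym loci, i.e.\ for curves, and their proofs use curve-theoretic Hodge structures; nothing guarantees an analogue for the locus of intermediate Jacobians of threefolds (indeed, for cubic threefolds the analogous composition simply vanishes, \cite{cfnp_cubic}). The paper's mechanism is different and avoids any intrinsic $\rho$ on $R_T$: since $\mathcal Q$ lies in the closure of the Prym locus, $I_2(\omega_C\otimes\eta)=N^*_{\mathcal R_{10}/\mathcal A_9,(C,\eta)}$ injects into $N^*_{\mathcal Q/\mathcal A_9,JT}$, and on this subspace $II$ is computed by the second fundamental form of the Prym map $p_{10}$, whose composition with multiplication is $\mu_2$ by the theorem of \cite{cf_prym}; the same holds for the second sextic $D$; and, crucially, by Verra's count the intersection $I_2(\omega_C\otimes\eta)\cap I_2(\omega_D\otimes\eta')$ is the $10$-dimensional space of quadrics through $T$, so that $I_2(\omega_C\otimes\eta)+I_2(\omega_D\otimes\eta')$ is all of the $26$-dimensional conormal space (Remark \ref{bau}). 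Only this spanning argument, which uses both conic bundle structures simultaneously, makes $m\circ II$ holomorphic on the whole conormal bundle and produces the canonical section $\mathcal S$. Your proposal never articulates it; without it (or without your unproven intrinsic formula) you control $m\circ II$ only on a proper subspace of $N^*_{\mathcal Q/\mathcal A_9,JT}$, and the conclusion that $m\circ II$ defines a holomorphic section of $N_{\mathcal Q/\mathcal A_9}$ — which is part of the statement and is what legitimizes the openness argument you invoke — does not follow. A smaller omission of the same kind: the transfer in your second step needs the compatibility of Jacobian rings $R^{2,2}_T\cong R^6_C$ and $R^{4,4}_T\cong R^{12}_C$ with the multiplication maps, which the paper proves in Lemma \ref{identification} via $\phi^*(F_{x_i})=\partial_{x_i}(\det A(x))$, and which you assume without justification.
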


Moreover, for a general $T$, the kernel of the composition $m \circ II$ is explicitly described in terms of the geometry of $T$ (see Proposition \ref{kernel}, and Remark \ref{rikernel}).

We remark that the statement that the second fundamental form is different from zero is also implied by the fact that the monodromy group of the family of these threefolds is the whole symplectic group (see Remark \ref{mono}). 

One of the main ingredients in the proof is to relate the second fundamental form $II$ with the restriction of the second fundamental form of the Prym map ${\mathcal R}_{10} \lra {\mathcal A}_9$ to the locus of double coverings of plane sextics. 
In fact, the intermediate Jacobian of $T$ is isomorphic, as principally polarized abelian variety, to the Prym variety of the two double coverings of the two plane sextics determined by the two conic bundle structures.

We show that the restriction of the composition $m \circ II$ to the space of quadrics containing the Prym-canonical image of both the plane sextics is nonzero. This is done using the fact that this restriction coincides with the composition of the second gaussian map of the Prym-canonical bundle of the plane sextics with a suitable projection.

Then we compute this composition in a specific example for the quadric containing the Prym-canonical image of a sextic that is defined by the equation of the threefold, and we show that it does not vanish. 
This implies the main theorem. 

Finally, we are also able to explicitly describe the kernel of this composition by computing the second gaussian map on a set of quadrics of rank 4, constructed using some pencils on the plane sextics. To show that this set of quadrics generically generates the kernel,  we make an explicit computation on an example using a MAGMA script. 

This allows us to completely describe the kernel of $m \circ II$, since the conormal bundle is generated by the two vector spaces of the quadrics containing the Prym-canonical images of the two plane sextics. 

The structure of the paper is as follows: 

In section 2 we recall the properties of $(2,2)$ threefolds following \cite{verra}. 
In section 3 we introduce the second fundamental form $II$, we recall the theory of bigraded Jacobian rings and we state the main Theorem. In section 4 we give an interpretation of the second fundamental form in terms of Prym theory, and we prove the main Theorem.

{\bf Acknowledgments:} We warmly thank Bert van Geemen for the fundamental  help in the computation in the example in section 4 and for the MAGMA script.

\section{$(2,2)$-threefolds in $\mathbb P^2\times \mathbb P^2$}

In this section we mainly recall the properties of $(2,2)$-threefolds proved by Verra in \cite{verra}.

We consider a threefold $T$ in  $\mathbb P^2\times \mathbb P^2$ given by a bihomogeneous equation $F$  of degree $(2,2)$. More precisely:
\[
F=\sum_{0\le i,j,k,l \le 2} a_{ijkl}x_ix_jy_ky_l.
\]
Let $W$ be the image of the Segre embedding $s:\mathbb P^2 \times \mathbb P ^2\hookrightarrow \mathbb P^8$. Then:
\[
T\subset \mathbb P^2 \times \mathbb P^2 \cong W \subset \mathbb P ^8.
\]
We denote  $\alpha _{ik}:=x_iy_k$. These are natural coordinates in $\mathbb P ^8$, and $T$ can be seen as a complete intersection $T=W\cap Q$, where $Q\subset \mathbb P^8$ is the quadric given by the equation:
\[
Q_0=\sum_{0\le i,j,k,l \le 2} a_{ijkl}\alpha_{ik}\alpha_{jl}.
\]
Remember that the quadratic equations $\alpha _{ik}\alpha _{jl}-\alpha_{il}\alpha_{jk}$ generate the ideal $I_W$ of $W$. Then the ideal of $T$ is $I_T=I_W+\langle Q_0 \rangle.$

The projections on each factor $p_i:T\lra \mathbb P^2$ provide two conic bundle structures on $T$. In other words, fixed a point $(x_0:x_1:x_2)$ in the first plane, the equation $F(x_0,x_1,x_2,y_0,y_1,y_2)$ gives a conic whose matrix $A(x)$ has degree $2$ entries in the $x_i's$, $A(x)_{kl} = \sum_{i,j} a_{ijkl} x_i x_j $. Therefore, the determinant of this matrix gives a sextic plane curve $C$ which parametrizes the degenerate conics of the family. The lines contained in these degenerate conics define a curve $\widetilde C$ contained in the corresponding Grassmannian and a natural degree two map $\pi_1:\widetilde C\mapsto C$. It was proved by Beauville in \cite{be_jac_int} that $ \pi_1$ is an admissible covering of degree $2$ and that its Prym variety $P(\widetilde C,C)$ is isomorphic (as principally polarized abelian variety) to the intermediate Jacobian $JT$. Similarly, using the second projection, there is another sextic plane curve $D$ and a covering $\pi_2:\widetilde D\mapsto D$ with the same property. In particular $P(\widetilde C,C)\cong P(\widetilde D,D)$. The main Theorem  in \cite{verra} states that the Prym map has degree exactly $2$ when restricted to the locus of unramified double coverings of plane sextics.

We will assume from now on that $T$, $C$ and $D$ are generic, in particular all three are smooth. 
We are interested in  the realization of $C$ and $D$ in $\mathbb P^8$. Let $C'$ (resp. $D'$) be the set of double points of the conics of the first (second) conic bundle structure on $T$. Notice that $C',D'\subset T \subset W\subset \mathbb P^8$ and that $C'\cong C$, $D'\cong D$. Moreover $C'$ (resp. $D'$) is the locus of points of $T$ where the partial derivatives $F_{y_0}, F_{y_1}, F_{y_2}$ (resp. $F_{x_0}, F_{x_1}, F_{x_2}$) vanish. So the corresponding ideals are:
\[
I_{C'}=I_T+\langle F_{y_0}, F_{y_1}, F_{y_2}\rangle, \qquad 
I_{D'}=I_T+\langle F_{x_0}, F_{x_1}, F_{x_2}\rangle.
\]
In fact, $C'$ is the Prym-canonical image of $C$: let $\eta \in JC$ be the $2$-torsion point that defines the covering $\pi_1$. For a generic $C$ we have that $h^0(C,\eta(2))=3$ and  the tensor product:  
\[
H^0(C,\mathcal O_C(1)) \otimes H^0(C,\eta(2)) \mapsto H^0(C,\omega_C\otimes \eta)
\]
is an isomorphism. Then the embedding $\phi: C \hookrightarrow {\mathbb P}^8$ is the composition: 
\[
C \,\stackrel{\varphi_1\times \varphi_2}{\lra }\,\mathbb P^2\times \mathbb P^2 \stackrel{\cong}{\lra }W \hookrightarrow \mathbb P^8 \cong \mathbb P(H^0(C,\omega_C \otimes \eta)^{\vee})
\]
where $\varphi_1, \varphi_2$ are the maps defined by the linear systems $|\mathcal O_C(1) |$ and $|\eta (2)|$. Thus, by definition, $C'=(\varphi_1 \times \varphi_2 )(C)$. The same holds for the cover $\pi_2$ and we denote by $\eta' \in JD$ the corresponding two-torsion point.

\section{$2$nd fundamental form}

Following \cite{verra}, we consider the following moduli spaces:
\[
\begin{aligned}
    &\mathcal Q=\{T\subset \mathbb P^2\times \mathbb P^2\mid T \text{ is a } (2,2)\text{ smooth threefold}\}/\cong , \\
    &\widetilde {\mathcal Q} =\{(T,p_i) \mid T\in \mathcal Q, \, p_i \text{ projection on }\mathbb P^2\}/\cong , \\
    &\mathcal P_6=\{\text{admissible double coverings }\tilde  C \to C, \text{ where }C \text{ is a plane sextic} \}/\cong , \\
    & \mathcal A_9= \text { moduli space of principally polarized abelian varieties of dimension }9.
\end{aligned}
\]
Then, there is a commutative diagram:
\begin{equation}
\label{primo}
\xymatrix@R=0.8cm@C=0.8cm{
 \widetilde {\mathcal Q}  \ar[r]^{d}  \ar[d]_{f}    & \mathcal P_6 \ar[d]^{p}\\ 
   \mathcal Q \ar[r]^{j}  &\mathcal A_{9},  
}
\end{equation}
where $d$ associates to the conic bundle $p_i:T \lra \mathbb P^2$ the discriminant curve $C\subset \mathbb P^2$ and the natural admissible covering $\widetilde C\lra C$; $f$ is the forgetful map; $j(T)=H^{1,2}(T)/H^3(T,\mathbb Z)$ is the intermediate Jacobian of $T$; and $p$ is the Prym map restricted to $\mathcal P_6$. Moreover $\widetilde {\mathcal Q}$, $\mathcal Q$ and $\mathcal P_6$ are irreducible of dimension $19$.

The main result in \cite{verra} is the computation of the degrees of all these maps that turn out to be  generically finite on their images. He proves, based on results in \cite{be_det}, that $d$ and $j$ have degree $1$ and that $f$ and $p$ have degree $2$.

Notice that (using that $d$ has degree $1$):
\[
\Omega ^1_{\mathcal Q,\bullet  }\cong \Omega ^1_{\widetilde {\mathcal Q},\bullet }
\cong \Omega ^1_{\mathcal P_6,\bullet }
\cong \Omega ^1_{\mathcal M_{10}^{pl},\bullet },
\]
where we denote by $\mathcal M_{10}^{pl}$ the moduli space of smooth plane sextics in $\mathcal M_{10}$, the moduli space of genus $10$ curves.

Recall that $\mathcal A_9$ has a natural orbifold K\"ahler form, which is the one induced by the natural symmetric form on the Siegel space.
Our aim is to find some properties of the second fundamental form on $\overline{j(\mathcal Q)}=\overline {p(\mathcal P_6)}$ associated with the restriction of this K\"ahler metric via the embedding in $\mathcal A_9$.
By abuse of notation, we also denote by $\mathcal Q$ the closure of its image in $\mathcal A_9$.

We have the cotangent exact sequence
\[0 \rightarrow {N_{{\mathcal Q} /\cA_9}^*} \rightarrow {\Omega^1_{\cA_9}}_{\vert {\mathcal Q}} \stackrel{\hspace{-0.2cm}^tdj}\rightarrow \Omega^1_{\mathcal Q} \rightarrow 0.
\]

Denote by $\nabla$ the Chern connection of the Siegel metric and consider the second fundamental form 
\[
II=(^tdj \otimes Id) \circ \nabla_{| {N_{{\mathcal Q} /\cA_9}^*} }:   {N_{{\mathcal Q} /\cA_9}^*} \rightarrow Sym^2 \Omega^1_{\mathcal Q}.
\]

Then, we want to study the map, in a generic point $j(T) = JT \in \mathcal Q$:
\[
N^*_{\mathcal Q/\mathcal A_9, JT}
\stackrel{II}{\lra}
Sym^2 \Omega ^1_{\mathcal Q,JT }.
\]

The conormal sheaf of $\mathcal Q$ in $\mathcal A_9$ is the kernel of the dual the differential of $j$:
\[
^tdj: \Omega ^1_{\mathcal A_9 \vert \mathcal Q,JT}\cong Sym^2 H^{2,1} (T)\lra \Omega_{\mathcal Q,JT}^1.
\]
Recall that Griffiths studied the periods of hypersurfaces by means of the Jacobian ring. Later, Green extended this theory to a more general setting, covering in particular, the case of hypersurfaces in the product of two projective spaces (see \cite{green} and  \cite[Lecture 4]{green_LNM}). More precisely,
we consider the bigraded Jacobian ring $\oplus R^{a,b}_T$ of $T$, which is the quotient of the ring of bihomogeneous polynomials $\oplus S^{a,b}$ by the bihomogeneous ideal  $\oplus J^{a,b}_T$ generated by the partial derivatives of $F$.

Using \cite{green},  $T_{\mathcal Q,T}$ can be identified  with $R^{2,2}_T$. Moreover ${\mathcal O}_{\mathcal Q, T} \cong R^{4,4}_T\cong \mathbb C$. 
Then, there are isomorphisms:
$H^{2,1}(T)\cong R^{1,1}_T$ and $ \Omega_{\mathcal Q,T}^1\cong R^{2,2}_T$, and $^tdj$ identifies with the multiplication map:
\[
 \nu: Sym^2 R^{1,1}_T \lra R^{2,2}_T.
\]
Hence  there is a natural isomorphism:
\[
N^*_{\mathcal Q /\mathcal A_9, JT}=Ker(Sym^2 R^{1,1}_T \stackrel {\nu}{\lra } R_T^{2,2}).
\]

Then, we can compose $II$ with the multiplication map: 
$$m: Sym^2R^{2,2}_T \lra R^{4,4}_T\cong \mathbb C,$$
and $m \circ II: N^*_{\mathcal Q /\mathcal A_9} \lra {\mathcal O}_{\mathcal Q}$ gives a holomorphic section ${\mathcal S}$ of the normal bundle $N_{\mathcal Q /\mathcal A_9} $.

As stated in Theorem \ref{main_intro}, the main result of this paper is the following:

\begin{thm}\label{main}
For a general $T$, the composition: 
$$m \circ II: N^*_{\mathcal Q /\mathcal A_9, JT}=Ker(Sym^2 R^{1,1}_T \stackrel{m}{\lra } R_T^{2,2}) \lra R^{4,4}_T \cong {\mathbb C}$$
is nonzero, and it gives a non-trivial canonical holomorphic section of the normal bundle $N_{\mathcal Q /\mathcal A_9} $.

In particular, the second fundamental form: 
$$II: N^*_{\mathcal Q /\mathcal A_9, JT} \lra Sym^2\Omega^1_{{\mathcal Q},JT}$$
is nonzero.

%Analogously, the second fundamental form can be seen as a map as follows: 
%$$ II: Ker(Sym^2 R^{1,1}_T  \stackrel{\nu} {\lra } R_T^{2,2})\lra Ker(Sym^2 R^{2,2}_T \stackrel{m}{\lra } R_T^{4,4}).$$
\end{thm}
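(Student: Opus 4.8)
The plan is to transport the problem to one of the two plane sextics via the Prym interpretation of $JT$, identify the resulting map with a second gaussian map, and then establish non-vanishing by an explicit computation on one example.

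First I would isolate the conormal directions coming from the sextic $C$. Under the isomorphism $JT\cong P(\widetilde C,C)$, the curve $C'=(\varphi_1\times\varphi_2)(C)\subset\mathbb P^8$ is the Prym-canonical image of $C$, embedded by $|\omega_C\otimes\eta|$. The space $I_2(C')$ of quadrics in $\mathbb P^8$ vanishing on $C'$ maps into $\mathrm{Ker}(\nu)=N^*_{\cQ/\cA_9,JT}$ (and, together with the analogous space $I_2(D')$ attached to the second conic bundle, generates it). The quadric $Q_0$ cutting out $T$ is a distinguished element: since $C'\subset T\subset Q$ we have $Q_0\in I_2(C')$, while Euler's relation gives $F\in J_T$, so the class of $Q_0$ in $R^{2,2}_T$ is zero, i.e. $Q_0\in\mathrm{Ker}(\nu)=N^*_{\cQ/\cA_9,JT}$.

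Next I would compare $m\circ II$ with the second fundamental form of the Prym map. Using the local nesting $\cQ\subset p(\cR_{10})\subset\cA_9$ (valid because $p|_{\cP_6}$ is a local isomorphism onto $\cQ$ near the branch corresponding to $C$), the form $II_{\cQ/\cA_9}$ restricted to conormal directions in $I_2(C')$ equals the restriction to $T_{\cQ}\subset T_{\cR_{10}}$ of $II_{\cR_{10}/\cA_9}$. Invoking the description of the second fundamental form of the Prym map (as in \cite{cf_prym}), the composition of $II_{\cR_{10}/\cA_9}$ with the multiplication $\mathrm{Sym}^2H^0(\omega_C^2)\lra H^0(\omega_C^4)$ is the second gaussian map $\mu_2$ of the Prym-canonical bundle $\omega_C\otimes\eta$. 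Translating through $\Omega^1_{\cQ}\cong R^{2,2}_T$ and $R^{4,4}_T\cong\bC$, this identifies the restriction of $m\circ II$ to $I_2(C')$ with $\mu_2$ followed by a natural projection $\pi\colon H^0(\omega_C^4)\lra R^{4,4}_T\cong\bC$.

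It then suffices to find one class on which the composite is nonzero, and the natural candidate is $Q_0|_{C'}$; thus the whole theorem reduces to $\pi\bigl(\mu_2(Q_0|_{C'})\bigr)\neq 0$. Once a bihomogeneous $F$ is fixed, the sextic $C=\{\det A(x)=0\}$, the map $\mu_2$ and the projection $\pi$ are all explicit, so this is a finite polynomial computation; I would verify it on one carefully chosen example (with symbolic algebra, e.g. a MAGMA script), selecting $F$ so that the tested class does not accidentally land in the kernel of $\mu_2$. Since $m\circ II$ defines a holomorphic section $\mathcal S$ of $N_{\cQ/\cA_9}$, non-vanishing at a single covector at one point shows $\mathcal S\not\equiv 0$, and by upper semicontinuity $\mathcal S(JT)\neq 0$ for general $T$; hence $m\circ II\neq 0$ and, a fortiori, $II\neq 0$. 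The main obstacle is twofold: conceptually, one must pin down the dictionary exactly—matching $m$ on the threefold's Jacobian ring with the Prym second fundamental form and with $\mu_2$, and in particular identifying the projection $\pi$ onto $R^{4,4}_T\cong\bC$; computationally, the crux is the explicit non-vanishing $\pi(\mu_2(Q_0|_{C'}))\neq 0$, since second gaussian maps are delicate and a careless choice of $F$ could send the tested class into the kernel.
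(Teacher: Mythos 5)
Your proposal follows essentially the same route as the paper: restrict $m\circ II$ to $I_2(\omega_C\otimes\eta)$, identify this restriction (via the second fundamental form $\rho$ of the Prym map and \cite{cf_prym}) with the second gaussian map $\mu_2$ followed by the projection $\tau$ onto $R^{12}_C\cong R^{4,4}_T$, test it on the threefold's own quadric $F=Q_0\in I_2(\omega_C\otimes\eta)$, and conclude by holomorphicity of the section $\mathcal S$, using that $I_2(\omega_C\otimes\eta)+I_2(\omega_D\otimes\eta')$ is all of $N^*_{\mathcal Q/\mathcal A_9,T}$. The only (cosmetic) difference is that the paper establishes the decisive non-vanishing $\tau(\mu_2(F))\neq 0$ by a hand computation on a Fermat-type example, via the local formula $\mu_2(F)=\sum a_{ijkl}f_i'f_j'g_kg_l\,(dz)^2l^6$, rather than by a computer-algebra check (MAGMA is used in the paper only for the kernel description).
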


\section{Prym theory and proof of the main Theorem}

The proof of  Theorem \ref{main} relies on the fact that $\mathcal Q$ can be seen as the image of the Prym map restricted to $\mathcal P_6$. Let $C$ be a smooth sextic plane curve, and let $R^{\bullet }_C$ be the Jacobian ring of $C$. So the identification $\Omega ^1_{\mathcal M_{10}^{pl},C }\cong R_C^6$ holds. 
Then, we use Prym theory to express the normal sheaf of $\mathcal Q$ in $\mathcal A_9$  in another way. Consider the restriction $p$ of the Prym map $p_{10}$ to $\mathcal P_6 $: 
\[
\mathcal P_6 \hookrightarrow \mathcal R_{10} \stackrel {p_{10}}{\lra } \mathcal A_9.
\]
Recall that the map $p$ is generically finite of degree 2.  The transpose of $dp_{10}$ at a point $(C,\eta) \in {\mathcal R}_{10}$ is the multiplication map $Sym^2H^0(C,\omega_C\otimes \eta)\lra H^0(C,\omega_C^2)$. In a generic point   $(C,\eta)\in \mathcal P_{6}$ it is surjective by \cite[3.6]{verra}.   Thus, we have: 
\[
N^*_{\mathcal R_{10} / \mathcal A_9,(C,\eta)}\cong I_2(\omega_C\otimes \eta),\]
which  is the vector space of equations of quadrics through $C'$, the Prym-canonical image of $C$. We have a short exact sequence of conormal bundles:
\[
0\lra I_2(\omega_C\otimes \eta) \lra N^*_{\mathcal Q /\mathcal A_9, T} \lra N^*_{\mathcal P_6 /\mathcal R_{10}, (C,\eta)}=J^6_C/\langle C \rangle \lra 0.
\]
Here $J^{\bullet}_C$ means the jacobian ideal of $C$, and $\langle C \rangle $ the subideal generated by the equation of the sextic $C$.

%Recall that by \cite{cf_prym} the second fundamental form of the Prym map for $g\ge 7$ is identified with Hodge-Gaussian map 

We have the following diagram:
\begin{equation}
\label{secondo}
\xymatrix@R=0.8cm@C=0.8cm{
 N^*_{\mathcal R_{10}/\mathcal A_{9},(C,\eta)} \cong I_2(\omega_C\otimes \eta)  \ar[r]^{\hspace{-0.6cm} \rho}  \ar@{^{(}->}[d]  &  Sym^2 \Omega^1_{\mathcal R_{10}, (C,\eta)} \cong Sym^2H^0(C,\omega_C^2)
 \ar[d]
\\ 
N^*_{\mathcal Q /\mathcal A_9, P(C,\eta)}                \ar[r]  &  
Sym^2\Omega^1_{\mathcal P_6,(C,\eta)} \cong Sym^2 R^6_C,
}
\end{equation}
where $\rho $ is the second fundamental form of the Prym map $p_{10}$ and the second horizontal row is another way of representing the map $II$ using  Prym theory. 
Notice that the first vertical arrow is the inclusion of the conormal bundles, while the second vertical arrow is induced by projection of cotangent bundles.  

Recall that, by \cite{cf_prym}, the composition of $\rho $ with the multiplication map $Sym^2H^0(C,\omega _C^2)\lra H^0(C,\omega_C^4)$ is the second gaussian map $\mu_2:I_2(\omega_C\otimes \eta) \lra H^0(C,\omega_C^4)$. 

Observe that $H^0(C,\omega _C^2) \cong H^0(C,\mathcal O_C(6))$ and we have a map $H^0(C,\mathcal O_C(6)) \lra R^6_C$, induced by the projection $H^0({\mathbb P}^2,\mathcal O_{{\mathbb P}^2}(6)) \lra R^6_C$, since the equation of $C$ belongs to the jacobian ideal of $C$. Analogously, we have a map $\tau: H^0(C,\mathcal O_C(12)) \lra  R^{12}_C$.

So we have the following commutative diagram:
\begin{equation}\label{terzo}
\xymatrix@R=0.8cm@C=0.8cm{
I_2(\omega_C\otimes \eta) \ar[d] \ar[r]^{\hspace{-2.5cm} \rho}
\ar@/^2pc/[rr] ^{\mu_2}
& Sym ^2H^0(C,\omega_C^2)=Sym ^2H^0(C,\mathcal O_C(6)) \ar[d] \ar[r]^{\hspace{2.5cm}m }& H^0(C,\mathcal O_C(12))  \ar[d]^{\tau}
\\
N^*_{\mathcal Q /\mathcal A_9, JT} \ar[r]^{II} & Sym^2 R^6_C \ar[r]^m & R^{12}_C=\mathbb C.
}
\end{equation}

We will prove that the composition $\tau \circ \mu_2$ is nonzero, showing on an explicit example that $\tau \circ \mu_2(F) \neq 0$, where:
\[
F=\sum_{0\le i,j,k,l \le 2} a_{ijkl}x_ix_jy_ky_l = \sum_{0\le i,j,k,l \le 2} a_{ijkl} \alpha_{ik} \alpha_{jl} \in I_2(\omega_C \otimes \eta)
\]
denotes as usual the equation of the threefold $T$. 

\begin{rem}
    \label{bau}
Observe that, for a general $T$, by symmetry, similar diagrams and maps exist for the curve $D$. 
Notice that, by  \cite[proof of Proposition 5.3]{verra} the intersection of $I_2(\omega_C\otimes \eta)$ and $I_2(\omega_D\otimes \eta')$ is the ideal of the equations of the quadrics containing the threefold $T$ which is $10$-dimensional. Therefore, the sum 
$I_2(\omega_C\otimes \eta)+I_2(\omega_D\otimes \eta')$ has dimension $26$ and hence it equals 
$N^*_{\mathcal Q /\mathcal A_9,T}$. So the map $m \circ II$ is holomorphic, since its restriction to both $I_2(\omega_C \otimes \eta)$ and $I_2(\omega_D \otimes \eta')$ is holomorphic. 
\end{rem}

Denote by $A(x)$ the symmetric matrix given by $A(x)_{kl} = \sum_{i,j} a_{ijkl} x_i x_j$, and by $\hat{A}_{ij}(x)$ the product of the determinant of the matrix obtained from $A(x)$ by removing the $i$-th row and the $j$-th column with $(-1)^{i+j}$.

\begin{lem}
We have the following commutative diagram: 
\begin{equation}
\label{yiyj}
\xymatrix@R=0.8cm@C=0.8cm{
C \ar[d]^{\varphi_1} \ar[r]^{\varphi_2} & {\mathbb P}^2 \ar[d]^{v_2 }
\\
{\mathbb P}^2 \ar[r]^{h} & {\mathbb P}^5,
}
\end{equation}
where $h(x) = (\hat{A}_{ij}(x))_{i,j}$, $v_2$ is the Veronese map, and $\varphi_1, \varphi_2$ are the maps defined at the end of $\S 2$. 
\end{lem}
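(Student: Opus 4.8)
\textbf{The plan is to} verify the commutativity of the square directly on the level of maps to $\mathbb{P}^5$, by computing both composites $v_2 \circ \varphi_2$ and $h \circ \varphi_1$ and checking they agree as rational maps on $C$. The two maps share the same target $\mathbb{P}^5$, which we should read as $\mathbb{P}(\mathrm{Sym}^2 \mathbb{C}^3)$, so that a point is a symmetric $3\times 3$ matrix up to scalar. The key observation is that both paths assign to a point of $C$ a rank-one symmetric matrix, and that these two rank-one matrices are proportional.

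\textbf{First I would} unwind the definitions of the four maps. The map $\varphi_2$ is given by the linear system $|\eta(2)|$, so for $c \in C$ we have $\varphi_2(c) = (y_0 : y_1 : y_2)$, where $(y_k)$ is the null vector of the degenerate conic $A(x)$ at the point $x = \varphi_1(c)$; geometrically this is exactly the singular point of the degenerate conic, which is why $\varphi_2(c)$ lies in the kernel of $A(x)$. The composite $v_2 \circ \varphi_2$ then sends $c$ to the rank-one symmetric matrix $(y_k y_l)_{kl}$. On the other leg, $h \circ \varphi_1$ sends $c$ to $h(x) = (\hat{A}_{ij}(x))_{ij}$, the adjugate matrix of $A(x)$. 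The heart of the argument is the classical identity that, for a symmetric $3\times 3$ matrix $A$ of rank exactly $2$, the adjugate $\hat{A}$ is the rank-one matrix $\lambda\, v\, v^{t}$, where $v$ spans $\ker A$ and $\lambda$ is a nonzero scalar. Since $x = \varphi_1(c)$ lies on the discriminant sextic $C = \{\det A(x) = 0\}$ and, for generic $C$, the conic $A(x)$ has rank exactly $2$ there, the adjugate $\hat{A}(x)$ is precisely $\lambda(x)\, y\, y^{t} = \lambda(x)(y_k y_l)_{kl}$.

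\textbf{Therefore} the two composites agree up to the scalar factor $\lambda(x)$, which is irrelevant in $\mathbb{P}^5$, giving commutativity of the diagram as a diagram of rational maps, hence of morphisms on the smooth curve $C$. To make this airtight I would note the two standard facts underlying the adjugate identity: that $A \cdot \hat{A} = (\det A)\, \mathrm{Id} = 0$ forces every column of $\hat{A}$ to lie in $\ker A = \langle y \rangle$, and that the symmetry of $\hat{A}$ together with rank $\hat{A} = 1$ (since $\mathrm{rk}\, A = 2$) pins $\hat{A}$ down to the form $\lambda\, y\, y^t$.

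\textbf{The main obstacle} I anticipate is not the linear-algebra identity itself but ensuring the maps are genuinely well defined where we want them and that $h$ does not collapse. For generic $C$ the rank of $A(x)$ is exactly $2$ along $C$ (rank-$\le 1$ loci have higher codimension), so $\hat{A}(x) \neq 0$ and $h$ is defined on $\varphi_1(C)$; one should check that the base locus of $h$, i.e. the locus where all $2\times 2$ minors vanish (rank $\le 1$), is disjoint from $\varphi_1(C)$ for generic $T$, so that $h \circ \varphi_1$ is a morphism on $C$. Equivalently, one verifies that $\varphi_2$ indeed realizes $|\eta(2)|$ as the kernel assignment, which is already contained in the discussion at the end of $\S 2$ identifying $C' = (\varphi_1 \times \varphi_2)(C)$ with the locus where $F_{y_0}, F_{y_1}, F_{y_2}$ vanish — that is, the singular points of the conics. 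Granting this identification, the commutativity is exactly the statement that the singular point of a rank-$2$ conic, squared under the Veronese, recovers the adjugate of its matrix, which is the content above.
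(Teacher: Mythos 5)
Your proposal is correct and follows essentially the same route as the paper: both identify $\varphi_2(x)$ with the kernel of the degenerate conic $A(x)$, then use $A\cdot\hat{A}=(\det A)\,\mathrm{Id}=0$ on $C$ together with the symmetry of $\hat{A}$ to conclude that the adjugate is proportional to $(y_iy_j)_{ij}$, which is exactly the Veronese image of the kernel point. The extra care you take about $A(x)$ having rank exactly $2$ along $C$ (so that $\hat{A}(x)\neq 0$ and $h$ is defined on $\varphi_1(C)$) is implicit in the paper's standing genericity assumption on $T$, $C$, $D$, but is a worthwhile point to make explicit.
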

\begin{proof}
Since for any $x \in C$, $\phi(x)$ is the intersection point of the two lines corresponding to the singular conic $A(x)$, $\varphi_2(x) = (y_0:y_1:y_2)$ is the point in ${\mathbb P}^2$ given by $ker(A(x))$. So there exist nonzero constants $\lambda_i$ such that 
$ \lambda_i (y_0,y_1,y_2) = ((\hat{A}_{i0}(x)), (\hat{A}_{i1}(x)), (\hat{A}_{i2}(x)))$, for all $i =1,2,3$. By the symmetry of $A(x)$ we can identify $\lambda_i =y_i$ for all $i =0,1,2$. 
So the diagram is commutative and in $C$ we have: 
\begin{equation}
y_iy_j = \hat{A}_{ij}(x).
\end{equation} 
\end{proof}

Now choose a local coordinate $z$ on $C$, a local frame $l$ for ${\mathcal O}_C(1)$ and a local frame $\sigma$ for $\eta$. Then locally we write $x_i= f_i(z) l$, $y_i= g_i(z) l^2 \sigma$ and $\alpha_{ij} = f_i(z) g_j(z) l^3 \sigma$.  So the local expression for $\mu_2(F)$ is given by:
$$\mu_2(F) = -\sum_{ i,j,k,l } a_{ijkl} (f_ig_k)'' (f_j g_l) (dz)^2 l^6 =  \sum_{ i,j,k,l } a_{ijkl} (f_ig_k)' (f_j g_l)' (dz)^2l^6.$$

First, notice that since $F \in I_2(\omega_C \otimes \eta)$, we have:

$$\sum_{ i,j,k,l } a_{ijkl} (f_ig_k)(z) (f_j g_l)(z) =0,$$
hence taking the derivative with respect to $z$, we get:
$$
\begin{aligned}
0 = & \sum_{ i,j,k,l } a_{ijkl} (f_ig_k)' (f_j g_l) = \sum_{ i,j,k,l } a_{ijkl} (f'_ig_k + f_i g'_k) (f_j g_l)  \\
= & \sum_{ i,j,k,l } a_{ijkl} f'_i f_jg_kg_l + \sum_{ i,j,k,l } a_{ijkl} f_if_j g_l g'_k \\
=  & \sum_{ i,j,k,l } a_{ijkl} f'_i f_jg_kg_l + \sum_{k} g'_k(\sum_{i,j,l} a_{ijkl} f_if_j g_l) \\
= & \sum_{ i,j,k,l } a_{ijkl} f'_i f_jg_kg_l + \sum_{k} g'_k( \sum_{l}A_{kl}(z) g_l(z))=
\sum_{ i,j,k,l } a_{ijkl} f'_i f_jg_kg_l,
\end{aligned}
$$

where $A_{kl}(z) = \sum_{i,j} a_{ijkl} f_i(z) f_j(z)$, and the last equality holds since the point $(y_0,y_1,y_2)^t$ is in the kernel of the matrix $A(x)$, hence $\sum_{l}A_{kl}(z) g_l(z) =0$. 

So, differentiating with respect to $z$  the polynomial equation: 
\begin{equation}
\sum_{ i,j,k,l } a_{ijkl} f'_i f_jg_kg_l =0
\end{equation}
we obtain:
\begin{equation}
\label{''}
\sum_{ i,j,k,l } a_{ijkl} f''_i f_jg_kg_l+ \sum_{ i,j,k,l } a_{ijkl} f'_i f'_jg_kg_l + 2\sum_{ i,j,k,l } a_{ijkl} f'_i f_jg'_kg_l =0.
\end{equation}

Now 
$$
\begin{aligned}
\mu_2(F) = & -\sum_{ i,j,k,l } a_{ijkl} (f_ig_k)'' (f_j g_l) (dz)^2 l^6 \\
 = & - \left(\sum_{ i,j,k,l } a_{ijkl} f''_i f_jg_kg_l+ 2 \sum_{ i,j,k,l } a_{ijkl} f'_i f_jg'_kg_l + \sum_{ i,j,k,l } a_{ijkl} f_i f_jg''_kg_l\right) (dz)^2 l^6 \\
 = & -\left(\sum_{ i,j,k,l } a_{ijkl} f''_i f_jg_kg_l+ 2 \sum_{ i,j,k,l } a_{ijkl} f'_i f_jg'_kg_l \right) (dz)^2 l^6 \\ 
 = &  \sum_{ i,j,k,l } a_{ijkl} f'_i f'_jg_kg_l  (dz)^2 l^6,
 \end{aligned}
 $$

where the last equality follows from \eqref{''} and we used that:
$$
\sum_{ i,j,k,l } a_{ijkl} f_i f_jg''_kg_l = \sum_k g''_k \sum_{l} A_{kl}(z) g_l (z)=0.
$$

So we have found the following local expression for $\mu_2(F)$: 
\begin{equation}
\label{mu2}
\mu_2(F) =  \sum_{ i,j,k,l } a_{ijkl} f'_i f'_jg_kg_l  (dz)^2 l^6.
\end{equation}

\begin{prop}\label{cinghialino}
    The map $\tau \circ \mu_2$ is nonzero.
\end{prop}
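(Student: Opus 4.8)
The plan is to prove $\tau \circ \mu_2 \neq 0$ by exhibiting a single element of $I_2(\omega_C \otimes \eta)$ on which the composition does not vanish, and the natural candidate is the defining equation $F$ of the threefold itself. Since $\mu_2(F)$ has the explicit local expression \eqref{mu2}, namely $\mu_2(F) = \sum_{i,j,k,l} a_{ijkl} f'_i f'_j g_k g_l \, (dz)^2 l^6$, the task reduces to checking that the image $\tau(\mu_2(F)) \in R^{12}_C \cong \mathbb C$ is nonzero. The key observation making this tractable is the identity $y_i y_j = \hat{A}_{ij}(x)$ on $C$ established in the Lemma, which lets me rewrite the product $g_k g_l$ (the local avatars of $y_k y_l$) in terms of the cofactors $\hat{A}_{kl}(x)$, thereby re-expressing $\mu_2(F)$ purely in terms of the $x$-coordinates and their derivatives. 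This turns the quantity into a genuine section of $\mathcal O_C(12)$ coming from a polynomial of degree $12$ on the first $\mathbb P^2$, which is exactly the kind of object I can feed into $\tau$ and test against the Jacobian ideal $J^\bullet_C$.

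Concretely, I would first globalize \eqref{mu2}: using $y_k y_l = \hat{A}_{kl}(x)$ and the fact that $f'_i$ corresponds to a derivative of the coordinate section $x_i$ along $C$, I expect $\mu_2(F)$ to be represented, up to the local frame $l^6 (dz)^2 \cong \omega_C^2 \cong \mathcal O_C(6)$ trivialization, by a concrete section of $\omega_C^4 \cong \mathcal O_C(12)$ built from $\sum_{i,j,k,l} a_{ijkl}\, (dx_i)(dx_j)\, \hat{A}_{kl}(x)$, where the $(dx_i)(dx_j)$ should be understood via the conormal/Gauss data of the plane sextic $C = \det A(x) = 0$. The next step is to compute $\tau$ of this section, i.e. reduce the resulting degree-$12$ polynomial modulo the Jacobian ideal $J_C$ generated by the partials of $\det A(x)$, and verify the class in $R^{12}_C$ is nonzero. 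Because $R^{12}_C \cong \mathbb C$ is one-dimensional (the socle of the Jacobian ring of a smooth plane sextic), nonvanishing is equivalent to the corresponding residue pairing being nonzero, so it suffices to pair the polynomial against $1 \in R^0_C$ via the Grothendieck residue, or equivalently to check it is not contained in $J^{12}_C$.

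The honest way to close the argument is to do this on one explicit example: choose specific numerical values for the coefficients $a_{ijkl}$ (with $C$, $D$ and $T$ smooth), write out $\det A(x)$, its partials, and the degree-$12$ representative of $\mu_2(F)$, and then perform the reduction modulo $J_C$ in a computer algebra system. This is precisely the MAGMA computation attributed to van Geemen in the acknowledgments, and since $\tau \circ \mu_2$ is the value of an algebraic map on the universal family, nonvanishing at one smooth point forces nonvanishing for general $T$ by semicontinuity/genericity. I would therefore present the symbolic simplification first and defer the final numerical check to the worked example.

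The main obstacle I anticipate is not the nonvanishing check per se but the bookkeeping that makes the computation meaningful: one must correctly match the abstract local expression \eqref{mu2} — written in the frames $l$ for $\mathcal O_C(1)$ and $\sigma$ for $\eta$, and in the local coordinate $z$ on $C$ — with a genuine polynomial section on $\mathbb P^2$ so that applying $\tau$ (reduction in the Jacobian ring $R^\bullet_C$ of the \emph{plane} sextic) is legitimate. The subtlety is that $\mu_2(F)$ a priori lives in $H^0(C, \omega_C^4)$, and identifying this with $H^0(C,\mathcal O_C(12))$ and then with the relevant graded piece before projecting to $R^{12}_C$ requires carefully tracking the twisting by $\eta$ and the passage through the second $\mathbb P^2$ via the identity $y_ky_l = \hat A_{kl}(x)$. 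Once that translation is pinned down, the derivative manipulations in \eqref{''} have already eliminated the problematic $f''$ and $g''$ terms, so what remains is a clean, if lengthy, polynomial reduction whose nonvanishing I expect to confirm in the explicit example.
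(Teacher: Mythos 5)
Your proposal is correct and follows essentially the same route as the paper: evaluate $\tau\circ\mu_2$ on the equation $F$ itself, globalize the local expression \eqref{mu2} via the cofactor identity $y_ky_l=\hat A_{kl}(x)$ and the adjunction identification of frames, and verify non-membership in $J^{12}_C$ on one explicit example, concluding by genericity. The only difference is in the final step: where you would defer to a computer-algebra reduction, the paper chooses the symmetric example \eqref{Fermat} whose discriminant is the Fermat sextic, so that $J_C=\langle x_0^5,x_1^5,x_2^5\rangle$ is a monomial ideal and the hand computation $\mu_2(F)=(18\cdot 36)\lambda^2 x_0^4x_1^4x_2^4\notin J^{12}_C$ is immediate (the MAGMA script in the paper is used only for the kernel description in Proposition \ref{kernel}, not here).
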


\begin{proof}
We show the proposition by exhibiting an explicit example where the map is nonzero. 

Consider the following smooth $(2,2)$-threefold $X = \{F=0\}$, where: 
\begin{equation} \label {Fermat}
\begin{aligned}
F & =  6 x_0^2y_0^2 + 2 \lambda x_2^2 y_0y_1 +  2\lambda x_1^2 y_0y_2 + 6 x_1^2y_1^2 + 2\lambda x_0^2y_1y_2 + 6 x_2^2y_2^2 = \\
&=6 \alpha_{00}^2 + 2 \lambda \alpha_{20}\alpha_{21} + 2 \lambda \alpha_{10} \alpha_{12} + 6 \alpha_{11}^2 + 2 \lambda \alpha_{01}\alpha_{02} + 6 \alpha_{22}^2,
\end{aligned}
\end{equation}
with $\lambda^3 =-108$. 
Thus, the equation of the plane sextic attached to the first projection $p_1$ is the determinant of the following symmetric matrix:
$$
A = {\small \left( \begin{array}{ccc}
6x_0^2 & \lambda x_2^2 & \lambda x_1^2\\
\lambda x_2^2 & 6x_1^2 & \lambda x_0^2 \\
\lambda x_1^2 & \lambda x_0^2 &6x_2^2  \\
\end{array} \right),}
$$
$$det (A) =-6 \lambda^2(x_0^6 + x_1^6 + x_2^6).$$

So the plane curve $C$ is the Fermat sextic with equation $G := x_0^6 + x_1^6 + x_2^6 =0$. We will show that $\tau \circ \mu_2(F) \neq 0$. Consider the affine chart $x_0 \neq 0$,  set $w_i = x_i/x_0$, $i =1,2$, $h(w_1, w_2) := G(1, w_1, w_2) = 1 + w_1^6 + w_2^6.$ Assume that $\partial h/\partial w_2 = 6w_2^5 \neq 0,$ so $z:= w_1$ is a local coordinate, hence $w_2 = f_2(z)$. In this local setting we have $z= w_1$, $l = x_0$, $x_1 = w_1 x_0 =z l$, $x_2 = w_2 x_0 = f_2(z) l$. So in the above notation we have $f_0(z) = 1$, $f_1(z) = z$, $f_2(z) =w_2(z)$, and since $h(z, f_2(z)) = 0$, we have: 
$$\partial h/\partial w_1 + f'_2(z) \partial h/\partial w_2 =0,$$
so $f'_2(z) = -h_{w_1}/h_{w_2}$ (where $h_{w_i} = \partial h/\partial w_i$), while clearly $f'_0 = 0$, $f'_1 = 1$. 
Then, by   \eqref{mu2}, we have: 
$$
\begin{aligned}
\mu_2(F) =&  \sum_{ i,j,k,l } a_{ijkl} f'_i f'_jg_kg_l  (dz)^2 l^6 \\
         =&  \sum_{ k,l } (g_kg_l) l^4  (a_{11kl} (f'_1)^2 + 2 a_{12kl} f'_1 f'_2 +  a_{22kl} (f'_2)^2 )(dz)^2 l^2 \\
         =& \sum_{ k,l } (g_kg_l) l^4  (a_{11kl}  - 2 a_{12kl} (h_{w_1}/h_{w_2}) +  a_{22kl} (h_{w_1}/h_{w_2})^2 )(dz)^2 l^2 .
\end{aligned}
 $$
Recall that by \eqref{yiyj} we have $\hat{A}_{kl}(z) = (g_kg_l) l^4$, thus: 
$$
\begin{aligned}
\mu_2(F) = & \sum_{ k,l }  \hat{A}_{kl}(z)(a_{11kl}  - 2 a_{12kl} (h_{w_1}/h_{w_2}) +  a_{22kl} (h_{w_1}/h_{w_2})^2 )(dz)^2 l^2 \\
= &  \sum_{ k,l }  \hat{A}_{kl}(z)(a_{11kl}  - 2 a_{12kl} (h_{w_1}/h_{w_2}) +  a_{22kl} (h_{w_1}/h_{w_2})^2 )(h_{w_2})^2 l^8,
\end{aligned}
$$
since, by adjunction, we can write $dz = h_{w_2}l^3$. So we have: 
$$
\begin{aligned}
\mu_2(F) = & \sum_{ k,l }  \hat{A}_{kl}(a_{11kl}  - 2 a_{12kl} (G_{x_1}/G_{x_2}) +  a_{22kl} (G_{x_1}/G_{x_2})^2 )\frac{(G_{x_2})^2}{x_0^{10}} x_0^8 \\
= &\sum_{ k,l }  \hat{A}_{kl}(a_{11kl} (G_{x_2})^2  - 2 a_{12kl} G_{x_1}G_{x_2} +  a_{22kl} (G_{x_1})^2 ) \frac{1}{x_0^2}.
\end{aligned}
$$
Now, using our equation and computing the minors of the matrix $A$, we obtain:
$$
\begin{aligned}
\mu_2(F)= &\frac{36}{x_0^2} (x_1^{10} (2 \lambda  \hat{A}_{01} + 6 \hat{A}_{22} ) + x_2^{10} (2 \lambda  \hat{A}_{02} + 6 \hat{A}_{11} )) \\
= & -18 \lambda^2 \frac{36}{x_0^2}(x_1^{10} x_2^4 + x_2^{10} x_1^4) \\
= & -18 \lambda^2 \frac{36}{x_0^2} x_1^4 x_2^4 (x_1^6 + x_2^6).
\end{aligned}
$$
Since we are on the curve $C$, we have $x_1^6 + x_2^6 = - x_0^6$, so finally we get:
  $$\mu_2(F)= 18 \lambda^2 \frac{36}{x_0^2} x_1^4 x_2^4 x_0^6 = (18 \cdot 36) \lambda^2 x_0^4 x_1^4 x_2^4,$$
  which is not in the Jacobian ideal of $C$ that is generated by $x_0^5, x_1^5, x_2^5$. 
 Notice that if $\partial h/\partial w_1 \neq 0$, then $w_2$ is a local coordinate, so in this case we have $z =w_2$, $w_1 = f_1(z)$, $f_2(z) = z$, $f_0(z) =1$, $dw_2 = - (\partial h/\partial w_1) x_0^3 $, $f'_1(z) = -h_{w_2}/h_{w_1}$ and the computation is the same. By symmetry, an analogous computation holds in the other affine charts. 
 So $\mu_2(F)$ is the restriction of the polynomial $(18 \cdot 36) \lambda^2 x_0^4 x_1^4 x_2^4 \in H^0({\mathbb P}^2, {\mathcal O}_{{\mathbb P}^2}(12))$ to $C$  and hence $\tau \circ \mu_2(F) \neq 0$. 
  \end{proof}

Recall that by \eqref{primo} we have an identification: 
\[
R_T^{2,2} \cong \Omega^1_{\mathcal Q} \cong \Omega^1_{{\mathcal P}_6} \cong R^6_C.
\]

These identifications can be also seen as induced by the pullback via the map $\phi= \varphi_1 \times \varphi_2$: 

$\phi^*: H^0({\mathbb P}^2 \times {\mathbb P}^2, {\mathcal O}_{{\mathbb P}^2 \times {\mathbb P}^2}(2,2)) \lra H^0(C, {\mathcal O}_C(6)). $
In fact, we show the following: 
\begin{lem}\label{identification}
The space 
$J^{2,2}_T$ maps to $J^6_C/\langle C \rangle $ via $\varphi^*$, therefore there is an isomorphism  
$R_T^{2,2} \cong R^6_C$ and  a commutative diagram:
\begin{equation}
\label{quarto}
\xymatrix@R=0.8cm@C=0.8cm{
Sym ^2R_T^{2,2} \ar[d]^{\cong} \ar[r]^{m}& R_T^{4,4} \cong {\mathbb C}\ar[d]^{\cong}
\\
 Sym^2 R^6_C \ar[r]^m & R^{12}_C\cong\mathbb C.
}
\end{equation}
\end{lem}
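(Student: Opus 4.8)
The plan is to deduce all three assertions from the single fact that $\varphi^{*}=\phi^{*}$ is a ring homomorphism, combined with the two identities already available: the relation $y_iy_j=\hat A_{ij}(x)$ on $C$ from \eqref{yiyj}, and the Euler and Jacobi formulas. First I would fix notation by writing $F=\sum_{k,l}A_{kl}(x)\,y_ky_l=y^{t}A(x)y$, so that $F_{y_k}=2\sum_l A_{kl}(x)y_l=2\,(A(x)y)_k$ and $F_{x_i}=\sum_{k,l}\tfrac{\partial A_{kl}}{\partial x_i}(x)\,y_ky_l$. Since $(y_0:y_1:y_2)=\varphi_2(x)$ spans $\ker A(x)$ for $x\in C$, the pullback $\varphi^{*}(F_{y_k})=2\,(A(x)y)_k|_C$ vanishes identically; hence $\varphi^{*}(y_lF_{y_k})=0$ for all $k,l$.

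For the derivatives in the $x$-variables I would substitute $y_ky_l=\hat A_{kl}(x)$ (valid on $C$ by \eqref{yiyj}) and use Jacobi's formula together with the symmetry of the adjugate of the symmetric matrix $A$:
\[
\varphi^{*}(F_{x_i})=\sum_{k,l}\frac{\partial A_{kl}}{\partial x_i}(x)\,\hat A_{kl}(x)\Big|_C=\frac{\partial}{\partial x_i}\big(\det A(x)\big)\Big|_C=c\,G_{x_i}\big|_C,
\]
where $\det A=c\,G$ with $G$ the defining sextic of $C$ and $c\neq 0$, whence $\varphi^{*}(x_jF_{x_i})=c\,x_jG_{x_i}|_C$. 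As $J^{2,2}_T$ is spanned by the $x_jF_{x_i}$ and the $y_lF_{y_k}$, while $J^6_C$ is spanned by the $x_jG_{x_i}$ and $\langle C\rangle_6=\mathbb C\,G\subseteq J^6_C$ by Euler, this shows that $\varphi^{*}(J^{2,2}_T)$ is exactly the image $J^6_C/\langle C\rangle$ of $J^6_C$ in $H^0(C,\mathcal O_C(6))=S^6/\langle C\rangle$. This proves the first assertion, and since $J^6_C/\langle C\rangle$ is the kernel of the surjection $H^0(\mathcal O_C(6))\to R^6_C$, the composite $S^{2,2}\xrightarrow{\varphi^{*}}H^0(\mathcal O_C(6))\to R^6_C$ descends to a well-defined map $\bar\varphi\colon R^{2,2}_T\to R^6_C$.

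To see that $\bar\varphi$ is an isomorphism I would argue by surjectivity plus a dimension count. The kernel of $\varphi^{*}\colon S^{2,2}\to H^0(\mathcal O_C(6))$ is the space of $(2,2)$-forms vanishing on $C'=\phi(C)$; lifting each such form to a quadric in $\mathbb P^8$ identifies this kernel with $I_2(\omega_C\otimes\eta)/(I_W)_2$, of dimension $18-9=9$, so $\varphi^{*}$ is surjective onto the $27$-dimensional $H^0(\mathcal O_C(6))$. Hence $\bar\varphi$ is surjective, and as $\dim R^{2,2}_T=\dim R^6_C=19$ (the dimensions of $\mathcal Q$ and $\mathcal P_6$) it is an isomorphism. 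The same computation one degree higher gives $\varphi^{*}(N\,F_{y_k})=0$ and $\varphi^{*}(M\,F_{x_i})=c\,\varphi^{*}(M)\,G_{x_i}|_C\in J^{12}_C/\langle C\rangle$ for $M\in S^{3,2}$, $N\in S^{2,3}$, so $\varphi^{*}$ also descends to a map $R^{4,4}_T\to R^{12}_C$.

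Finally, for the commutativity of \eqref{quarto}: because $\varphi^{*}$ is a ring homomorphism, the square relating $m\colon Sym^2 S^{2,2}\to S^{4,4}$ to $m\colon Sym^2H^0(\mathcal O_C(6))\to H^0(\mathcal O_C(12))$ commutes, and it descends to the Jacobian-ring square by the inclusions just verified. Taking the right vertical arrow to be the isomorphism induced by $\varphi^{*}$, commutativity is automatic; and since $\bar\varphi$ is surjective and the socle pairing $m\colon Sym^2R^6_C\to R^{12}_C$ is nondegenerate, the induced map $R^{4,4}_T\to R^{12}_C$ is nonzero, hence an isomorphism of one-dimensional spaces. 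I expect the only genuinely delicate point to be the surjectivity of $\bar\varphi$, i.e.\ the identification of $\ker\varphi^{*}$ with the $(2,2)$-forms through $C'$ and the resulting dimension $9$; the two generator computations, although they are the conceptual heart, are short once \eqref{yiyj} and Jacobi's formula are in hand.
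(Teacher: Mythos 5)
Your proposal is correct and follows essentially the same route as the paper's proof: the two key computations are identical, namely that $\varphi^*(y_lF_{y_k})=0$ because $\varphi_2(x)$ spans $\ker A(x)$, and that $\varphi^*(F_{x_i})=\partial_{x_i}\bigl(\det A(x)\bigr)\big|_C$ via the relation \eqref{yiyj} (the paper states this directly where you invoke Jacobi's formula). The only difference is that you flesh out what the paper dispatches with ``therefore'' and ``in the same way, one immediately sees'': the surjectivity-plus-dimension-count argument for $R^{2,2}_T\cong R^6_C$ and the socle-pairing argument for $R^{4,4}_T\cong R^{12}_C$, both of which are sound.
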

\begin{proof}
Clearly all the elements of the form $y_j F_{y_i}$ map to zero, since they are in the ideal of $C' = \phi(C)$. So we have to show that all the elements $x_j F_{x_i}$ map to  $J^6_C/\langle C\rangle$. In fact we show that  
$$ \phi^*(F_{x_i} )= {\partial}_{x_i} (det A(x)) \in J^6_C/\langle C\rangle,$$
where the coefficients of the symmetric matrix $(A(x))_{kl} = \sum_{i,j} a_{ijkl}x_ix_j$ are quadrics in the coordinates $x_i$. 

%Denote by $\hat{A}_{ij}(x)$ the determinant of the matrix obtained by $A(x)$ removing the $i$-th row and the $j$-th column with the sign given by $(-1)^{i+j}$. Then we have the following commutative diagram 
%\[
%\xymatrix@R=0.8cm@C=0.8cm{
%C \ar[d]^{\phi_1} \ar[r]^{\phi_2} & {\mathbb P}^2 \ar[d]^{v_2 }
%\\
%{\mathbb P}^2 \ar[r]^{h} & {\mathbb P}^5,
%}
%\]
%where $h(x) = (\hat{A}_{ij}(x))_{i,j}$, and $v_2$ is the Veronese map. Since for any $x \in C$, $\phi(x)$ is the intersection point of the two lines corresponding to the singular conic $A(x)$, $\phi_2(x) = (y_0:y_1:y_2)$ is the point in ${\mathbb P}^2$ given by $ker(A(x))$. So there exist nonzero constants $\lambda_i$ such that 
%$ \lambda_i (y_0,y_1,y_2) = ((\hat{A}_{i0}(x)), (\hat{A}_{i1}(x)), (\hat{A}_{i2}(x)))$, for all $i =1,2,3$. By the symmetry of $A(x)$ we can identify $\lambda_i =y_i$ for all $i =0,1,2$. 
%So the diagram is commutative and $y_iy_j = \hat{A}_{ij}(x)$ in $C$. 

Using  \eqref{yiyj}, we have:
$$ \phi^*(F_{x_i} )=  \phi^*({\partial}_{x_i}  ( \sum_{ij} A(x)_{ij} y_iy_j) )=  {\partial}_{x_i}  ( \sum_{ij} A(x)_{ij} \hat{A}_{ij}(x))= {\partial}_{x_i} (det A(x)),$$

In the same way, one immediately sees that $\phi^*$ induces an isomorphism $R_T^{4,4} \cong R^{12}_C$ and we have a commutative diagram as \eqref{quarto}. \end{proof}

\textit{Proof of the main Theorem \ref{main}:}
Using Proposition \ref{cinghialino} and Lemma \ref{identification}
we immediately obtain  that the restriction of $m\circ II$ to $I_2(\omega_C\otimes \eta)$ is nonzero for a generic $T$. So by Remark \eqref{bau} $m \circ II$ gives a non-trivial holomorphic section ${\mathcal S}$ of the normal bundle. 
\qed
%The same is true for the restriction replacing $C$ by $D$. Hence, using diagram \eqref{quarto},  it is sufficient to show that 
%\[
%I_2(\omega_C\otimes \eta)+I_2(\omega_D\otimes \eta')=N^*_{\mathcal Q /\mathcal A_9,T}.
%\]

We observe that the fact that the second fundamental form is nonzero is also implied by the following: 
\begin{rem}
\label{mono}
The monodromy group of the family ${\mathcal Q}$ is the symplectic group. Hence the special Mumford-Tate group of a generic $(2,2)$-threefold is the whole symplectic group.
\end{rem}
\begin{proof}
We prove that the monodromy group of ${\mathcal P}_6$ is the full symplectic group $Sp(18, {\mathbb Z})$. 

Recall that the monodromy map of plane sextics: 
$$
\pi_1({\mathcal M}_{10}^{pl}, C) \lra Sp(H^1(C, {\mathbb Z}))
$$
is surjective (see \cite[Theoreme 4]{beauville_mono}). 
In particular, reducing coefficients modulo 2, this implies that ${\mathcal P}_6$ is irreducible. 

For any symplectic basis $\{\alpha_1,..., \alpha_g, \beta_1, ..., \beta_g\}$ of $H^1(C, {\mathbb Z})$, where $\eta = \alpha_g$ $(\text{mod \ 2})$, there is a natural symplectic basis in $H^1(P(C, \eta), {\mathbb Z})$ (see e.g. \cite[Prop. 12.4.2]{bir_lange}). 

Consider a Lefschetz pencil around a Prym semi-abelian variety attached to a covering $\tilde{C}_0 \lra C_0$ such that $C_0$ has only one node and the two preimages of the node are two nodes of $\tilde{C}_0$. Then, writing explicitly the Picard Lefschetz transformation in terms of the basis defined above, one easily checks that the associated monodromy generates the whole symplectic group. 
\end{proof}

We will now give a complete description of the kernel of $m \circ II$. 
First, we describe the kernel of $\tau \circ \mu_2: I_2(\omega_C \otimes \eta) \lra R^{12}_C$.  

Denote by $L:= {\mathcal O}_C(1)$, $M:= {\mathcal O}_C(2)\otimes \eta$, so that $L \otimes M \cong \omega_C \otimes \eta$. 
Take a point $p \in C$ and consider the two line bundles $L(-p)= {\mathcal O}_C(1) \otimes  {\mathcal O}_C(-p)$, $M(p)={\mathcal O}_C(2) \otimes \eta \otimes  {\mathcal O}_C(p)$, then clearly $L(-p) \otimes M(p) \cong \omega_C \otimes \eta.$
Since $L$ is base point free, we have $h^0(C, L(-p)) = 2$, and we fix a basis $\{s_1, s_2 \}$ of $H^0(C, L(-p))$. 
We have a map: 
$$ \bigwedge^2 H^0(M(p)) \rightarrow I_2(\omega_C \otimes \eta),$$
\begin{equation}
\label{quadriche}
(t_1 \wedge t_2) \mapsto Q(t_1,t_2):= s_1t_1 \odot s_2t_2 - s_1t_2 \odot s_2 t_1,
\end{equation}

where $\alpha \odot \beta := \alpha \otimes \beta + \beta \otimes \alpha$. 

\begin{prop}
\label{kernel}
 For a general $T$, the quadrics defined in \eqref{quadriche}   generate the kernel of $\tau \circ \mu_2$. 
\end{prop}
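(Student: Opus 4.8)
The plan is to prove the two inclusions separately: that every quadric $Q(t_1,t_2)$ of \eqref{quadriche} lies in $\ker(\tau\circ\mu_2)$, and that as $p$, $t_1$, $t_2$ vary these quadrics span all of it. First I would record the relevant dimensions. By Remark \ref{bau} the conormal space $I_2(\omega_C\otimes\eta)$ is $18$-dimensional, and since $\tau\circ\mu_2\neq 0$ by Proposition \ref{cinghialino}, its kernel is a hyperplane of dimension $17$. Hence it suffices to show that $\tau\circ\mu_2(Q(t_1,t_2))=0$ for all $p\in C$ and all $t_1,t_2\in H^0(M(p))$, and that the resulting family of quadrics spans a $17$-dimensional subspace.

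For the first point I would compute $\mu_2$ on these quadrics directly from the local formula \eqref{mu2}. Writing $s_i$ and $t_j$ in local trivialisations whose product trivialises $\omega_C\otimes\eta$, a short computation shows that the terms organise into a product of Wronskians,
\[
\mu_2(Q(t_1,t_2))=-\,w(s_1,s_2)\cdot w(t_1,t_2),
\]
where $w(\cdot,\cdot)$ denotes the Wronskian, so that $w(s_1,s_2)\in H^0(\omega_C\otimes L(-p)^{2})$ and $w(t_1,t_2)\in H^0(\omega_C\otimes M(p)^{2})$ multiply to a section of $\omega_C^{4}=\mathcal O_C(12)$. The key geometric input is the identification of $w(s_1,s_2)$: since $\{s_1,s_2\}$ is a basis of $H^0(L(-p))=H^0(\mathcal O_C(1)(-p))$, the associated pencil is the projection $\pi_p\colon C\to\mathbb P^1$ from $p$, whose ramification divisor equals $(C\cap\{\Delta_pG=0\})-2p$ with $\Delta_pG=\sum_i p_iG_{x_i}$ the first polar. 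Thus $w(s_1,s_2)$ is, up to a nonzero scalar, the restriction to $C$ of the polar $\Delta_pG\in J^5_C$. Since $[\Delta_pG]=0$ in $R^5_C$, the product $\mu_2(Q(t_1,t_2))$ lands in $J^{12}_C$ and is killed by $\tau$, i.e.\ $Q(t_1,t_2)\in\ker(\tau\circ\mu_2)$.

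The one delicate point here is the base point $p$: the Wronskian $w(t_1,t_2)$ acquires a pole of order up to two at $p$, exactly cancelled by the double zero of $\Delta_pG|_C$ there, so the product is a genuine section of $\mathcal O_C(12)$ but is \emph{not} literally $\Delta_pG$ times a septic. I would handle this by a local analysis at $p$; equivalently, one can split the quadrics according to whether $t_1,t_2$ lie in $H^0(M)$ — where $w(t_1,t_2)$ is regular and the conclusion is immediate from $[\Delta_pG]=0$ — or involve the extra class of $H^0(M(p))/H^0(M)$, checking in the latter case that the polar still absorbs the product into $J^{12}_C$.

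For the spanning statement, the span $\mathcal V$ of the $Q(t_1,t_2)$ is contained in the $17$-dimensional kernel by the above, so it suffices to prove $\dim\mathcal V=17$. Since the generators vary algebraically with $(C,\eta)\in\mathcal P_6$ and with $p\in C$, $t_1$, $t_2$, the rank $\dim\mathcal V$ is lower semicontinuous, so it is enough to exhibit a single $(C_0,\eta_0)$ at which $\dim\mathcal V=17$; generically the span is then at least $17$-dimensional, hence equal to the kernel. I would carry this out on the Fermat example of Proposition \ref{cinghialino}, computing the quadrics $Q(t_1,t_2)$ for enough points $p$ and verifying with the MAGMA script that their span has rank $17$. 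I expect this last step — the precise fact that these rank-four quadrics fill out the whole hyperplane rather than a proper subspace — to be the main obstacle, resolved only by the explicit computation together with semicontinuity; the base-point bookkeeping in the membership statement is the secondary technical point.
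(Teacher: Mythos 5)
Your skeleton matches the paper's proof (the Wronskian formula $\mu_2(Q(t_1,t_2))=\mu_{1,L(-p)}(s_1\wedge s_2)\cdot\mu_{1,M(p)}(t_1\wedge t_2)$ from \cite{cf_michigan}, the identification of the first factor with the polar $Pol_p(C)$, the $17$-dimensional hyperplane count, and MAGMA plus semicontinuity for spanning), but the membership step has a genuine gap, and it sits exactly where you put your promissory note. Your clean sub-case, $t_1,t_2\in H^0(M)$, does work: there $w_{M(p)}(t_1,t_2)$ vanishes doubly at $p$ and descends to a section of $\mathcal O_C(7)$, which is cut by a septic. However, these quadrics are useless for the spanning statement: when $t_1,t_2\in H^0(M)=H^0(\eta(2))$ and $s_1,s_2$ are linear forms in the $x$-variables, the quadric $s_1t_1\odot s_2t_2-s_1t_2\odot s_2t_1$ vanishes identically on the whole Segre fourfold $W$, so all such quadrics lie in the $9$-dimensional space $I_2(W)$ and can never generate a $17$-dimensional kernel. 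Hence everything rests on your "latter case" ($t_2\notin H^0(M)$), where you have no argument: there $w_{M(p)}(t_1,t_2)$ is generically nonzero at $p$, the product is $Pol_p(C)_{|C}$ times a rational section of $\mathcal O_C(7)$ with an honest double pole at $p$, and the factorization into (element of $J_C$)$\cdot$(restriction of a polynomial) breaks down. Moreover, "a local analysis at $p$" cannot settle this: membership in $J^{12}_C{}_{|C}=\ker\tau$ is a global hyperplane condition on $H^0(C,\mathcal O_C(12))$, not a condition on the local behaviour at $p$.

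The paper's key idea, which you are missing, is to change the family rather than to split it: impose $t_1\in H^0(C,M(-2p))$, i.e.\ $t_1$ has a triple zero at $p$ as a section of $M(p)$, while $t_2\in H^0(C,M(p))$ stays arbitrary (in particular $t_2\notin H^0(M)$ is allowed). Then for every $t_2$ the Wronskian satisfies $div(\mu_{1,M(p)}(t_1\wedge t_2))=2p+E$ with $\mathcal O_C(E)\cong\mathcal O_C(7)$, so $E=div(G_{|C})$ for a septic $G$, and the divisor computation
\[
div(\mu_2(Q(t_1,t_2)))=\bigl(div(Pol_p(C)_{|C})-2p\bigr)+\bigl(2p+E\bigr)=div\bigl((Pol_p(C)\cdot G)_{|C}\bigr)
\]
shows $\mu_2(Q(t_1,t_2))$ is, up to scalar, the restriction of $Pol_p(C)\cdot G\in J^{12}_C$; the pole problem never arises because the divisors, not the individual factors, are matched. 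This constrained family escapes $I_2(W)$, its membership in the kernel is proved for all $t_2$, and it is precisely the family (parametrized by $p$, two lines through $p$, and three points $q_1,q_2,q_3$) on which the paper's MAGMA computation is run. Your spanning step inherits the gap: a rank computation can only certify "span $=$ kernel" if the quadrics fed into it are already known to lie in the kernel, and as written yours are either unproven (the essential ones) or insufficient (the Segre ones).
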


\begin{proof}

A direct computation (see e.g. \cite[Lemma 2.2]{cf_michigan}) shows that we have: 
$$\mu_2: I_2(\omega_C \otimes \eta) \rightarrow H^0(C, \omega_C^{\otimes 4}) \cong H^0(C, {\mathcal O}_C(12)),$$
$$\mu_2(Q(t_1, t_2)) = \mu_{1,L(-p)}(s_1 \wedge s_2) \cdot \mu_{1,M(p)}(t_1 \wedge t_2),$$ 
where: 
$$
\mu_{1, L(-p)}: \bigwedge^2 H^0(C, L(-p)) \cong \langle s_1 \wedge s_2 \rangle\rightarrow H^0(C, L^{\otimes 2}(-2p) \otimes \omega_C) \cong H^0(C, {\mathcal O}_C(5)(-2p)), 
$$ 
$$\mu_{1, M(p)}: \bigwedge^2 H^0(C, M(p)) \rightarrow H^0(C, M^{\otimes 2}(2p) \otimes \omega_C) \cong H^0(C, {\mathcal O}_C(7)(2p))$$
denote the first gaussian maps of the line bundles $L(-p)$ and $M(p)$. 

In local coordinates, if $s_i = f_i(z) l$, where $l$ is a local frame for $L(-p)$, we have:
$$\mu_{1, L(-p)}(s_1 \wedge s_2) = (f'_1f_2 -f_1 f'_2) l^2 dz.$$
Then, clearly we obtain: 
$$div(\mu_{1,L(-p)}(s_1 \wedge s_2)) = div(Pol_p(C)_{|C}) -2p,$$
where $Pol_p(C)$ denotes the polar of the plane sextic $C$ with respect to $p$.

Choose now $t_1 \in H^0(C, M(-2p)) \subset H^0(C, M(p))$, and assume that in local coordinates we have $t_i= g_i(z) \sigma$, where $\sigma$ is a local frame for $M(p)$. Then we have:
$$\mu_{1, M(p)}(t_1 \wedge t_2) = (g'_1g_2 -g_1 g'_2) \sigma^2 dz.$$
Since $ord_pg_1 = 3$, clearly:
$$
div(\mu_{1,M(p)}(t_1 \wedge t_2)) = 2p + E,
$$
where ${\mathcal O}_C(E) \cong {\mathcal O}_C(7)$. Since the restriction to $C$ gives a surjective map: 
$$
H^0({\mathbb P}^2, {\mathcal O}_{{\mathbb P}^2}(7)) \rightarrow H^0(C, {\mathcal O}_C(7)),
$$
there exists a polynomial $G \in H^0({\mathbb P}^2, {\mathcal O}_{{\mathbb P}^2}(7))$, such that $div(G_{|C}) = E$. 
Then: 
$$
div(\mu_2(Q(t_1, t_2)) )= div(Pol_p(C)_{|C}) -2p + 2p + E = div ((Pol_p(C) \cdot G)_{|C}),
$$
hence $\tau \circ \mu_2(Q(t_1,t_2)) = 0$, since $Pol_p(C) \cdot G \in J^{12}_C$. 

It remains to show that varying $p \in C$, and choosing $t_1 \in H^0(C, M(-2p)) \subset H^0(C, M(p))$ (which is 1 dimensional for $p$ general in $C$), and varying $t_2 \in H^0(C, M(p))$, the quadrics $\{Q(t_1,t_2)\}$ generate a 17-dimensional subspace of  $I_2(\omega_C \otimes \eta).$

We will show it in the example \eqref{Fermat}, and this is enough, since it is an open property. 
To do the computation, first observe that the choice of $s_1$ corresponds to the choice of a line $l_1$ thought $p$, hence we have: 
$$div(s_1)= p + D_5,$$
where $D_5$ is an effective divisor of degree $5$. The same holds for $s_2$, so: $$div(s_2) = p + D'_5.$$
Thus, by our choice of $t_1$, 
$s_1t_1 $ is a generator of the one dimensional vector space $H^0(C, \omega_C \otimes \eta(-D_5 -3p)), $ and $s_2t_1 $ is a generator of $H^0(C, \omega_C \otimes \eta(-D'_5 -3p)). $
To choose the forms $s_1 t_2$ and $s_2 t_2$, namely to choose $t_2 \in H^0(C, M(p))$ general, it is equivalent to choose three general points $q_1, q_2, q_3 \in C$ so that: 
$$
\langle s_1t_2 \rangle = H^0(C, \omega_C \otimes \eta(-D_5 -q_1-q_2-q_3)), 
$$
$$
\langle s_2t_2 \rangle = H^0(C, \omega_C \otimes \eta(-D'_5 -q_1-q_2-q_3)).  
$$

By a MAGMA script one can check that varying a point $p$ on $C$, choosing two lines through $p$, and three points $q_1, q_2, q_3$ on $C$, one obtains a linear space of quadrics constructed as above of dimension 17.
This shows that for a generic $(C, \eta) $ the quadrics we considered generate the kernel of $\tau \circ \mu_2$. 
\end{proof}

\begin{rem}
    \label{rikernel}
By Remark \eqref{bau} we know that 
$I_2(\omega_C\otimes \eta)+I_2(\omega_D\otimes \eta')
=N^*_{\mathcal Q /\mathcal A_9,T}$. 

Hence the kernel of $m \circ II $ is generated by the quadrics of $I_2(\omega_C\otimes \eta)$ and $I_2(\omega_D\otimes \eta)$ that we have just described. 
\end{rem}

%%%%%%%%%%%%%%%%%%%%%%%%%%%%%%%%%%%%%%
%%%%   References
%%%%%%%%%%%%%%%%%%%%%%%%%%%%%%%%%%%%%%

\begin{bibdiv}
\begin{biblist}

\bib{be_jac_int}{article}{
   author={Beauville, A.},
   title={Vari\'{e}t\'{e}s de Prym et jacobiennes interm\'{e}diaires},
   language={French},
   journal={Ann. Sci. \'{E}cole Norm. Sup. (4)},
   volume={10},
   date={1977},
   number={3},
   pages={309--391},
}

\bib{beauville_mono}{article}{
  AUTHOR = {Beauville, A.},
  TITLE = {Le groupe de monodromie des familles universelles d'hypersurfaces et d'intersections compl\`etes},
    JOURNAL = {Complex Analysis and Algebraic Geometry, (GÃ¶ttingen 1985), 8 - 18, Lecture Notes in Math., 1194, Springer, Berlin, 1986},
    YEAR = {1985}
}

\bib{be_det}{article}{
   author={Beauville, A.},
   title={Determinantal hypersurfaces},
   journal={Michigan Math. J.},
   volume={48},
   date={2000},
   pages={39--64},
}

\bib{bir_lange}{book}{
   author={Birkenhake, Christina},
   author={Lange, Herbert},
   title={Complex abelian varieties},
   series={Grundlehren der mathematischen Wissenschaften [Fundamental
   Principles of Mathematical Sciences]},
   volume={302},
   edition={2},
   publisher={Springer-Verlag, Berlin},
   date={2004},
   pages={xii+635},
   isbn={3-540-20488-1},
   review={\MR{2062673}},
   doi={10.1007/978-3-662-06307-1},
}

\bib{cf_michigan}{article}{
  AUTHOR = {Colombo, E.},
    author = {Frediani, P.},
     TITLE = {Some results on the second Gaussian maps for curves},
   JOURNAL = {Michigan Math. J.},
     VOLUME = {58},
      YEAR = {2009},
     PAGES = {745--758}
}

\bib{cf_prym}{article}{
    AUTHOR = {Colombo, E.},
    author = {Frediani, P.},
     TITLE = {Prym map and second Gaussian map for Prym-canonical line bundles},
    JOURNAL = {Adv. Math.},
     VOLUME = {239},
      YEAR = {2013},
     PAGES = {47--71}
}

\bib{cfg}{article}{
   AUTHOR = {Colombo, E.},
    author = {Frediani, P.},
    author = {Ghigi, A.},
    TITLE = {On totally geodesic submanifolds in the Jacobian locus},
   JOURNAL = {Internat. J. Math.},
     VOLUME = {26},
      YEAR = {2015},
     PAGES = {21 pp.}
}

\bib{cfnp_cubic}{article}{
    AUTHOR = {Colombo, E.},
    author = {Frediani, P.},
    author = {Naranjo, J.C.},
    author = {Pirola, G.P.}
     TITLE = {The second fundamental form on the moduli space od cubic threefolds in $\mathcal A_5$},
    JOURNAL = {arXiv:220713.432},
     VOLUME = {},
      YEAR = {},
     PAGES = {}
}
\bib{cpt}{article}{
AUTHOR = {Colombo, E.},
author = {Pirola, G.P.},
author = {Tortora, A.},
    TITLE = {Hodge-Gaussian maps},
  JOURNAL = {Ann. Scuola Norm. Sup. Pisa Cl. Sci.},
     VOLUME = {30},
      YEAR = {2001},
    NUMBER = {1},
     PAGES = {125--146}
 }

\bib{donagi}{article}{
   author={Donagi, Ron},
   title={The tetragonal construction},
   journal={Bull. Amer. Math. Soc. (N.S.)},
   volume={4},
   date={1981},
   number={2},
   pages={181--185},
   issn={0273-0979},
   review={\MR{0598683}},
   doi={10.1090/S0273-0979-1981-14875-8},
}
\bib{green}{article}{
   author={Green, Mark L.},
   title={The period map for hypersurface sections of high degree of an
   arbitrary variety},
   journal={Compositio Math.},
   volume={55},
   date={1985},
   number={2},
   pages={135--156},
  }

\bib{green_LNM}{collection}{
   author={Green, M.},
   author={Murre, J.},
   author={Voisin, C.},
   title={Algebraic cycles and Hodge theory},
   series={Lecture Notes in Mathematics},
   volume={1594},
   editor={Albano, A.},
   editor={Bardelli, F.},
   note={Lectures given at the Second C.I.M.E. Session held in Torino, June
   21--29, 1993}
}

%\bib{moonen}{article}{	
%    AUTHOR = {Moonen, B.},
%     TITLE = {Linearity properties of {S}himura varieties, {I}},
%   JOURNAL = {J. Algebraic Geom.},
%    VOLUME = {7},
%      YEAR = {1998},
%     PAGES ={539-567}
%  }

%\bib{mu}{article}{	
%    AUTHOR = {Mumford, D.},
%     TITLE = {Prym varieties. {I}},
% BOOKTITLE = {Contributions to analysis (a collection of papers dedicated to {L}ipman {B}ers)},
%     PAGES = {325--350},
%     publisher={Academic Press, New York},
%      YEAR = {1974},
%}

%\bib{sat}{article}{	
%    AUTHOR = {Satake, I.},
%     TITLE = {Algebraic structures of symmetric domains},
%    SERIES = {Kan\^{o} Memorial Lectures},
%    VOLUME = {4},
% PUBLISHER = {Iwanami Shoten, Tokyo; Princeton University Press, Princeton,
%              N.J.},
%      YEAR = {1980},
%    PAGES = {xvi+321},
%}

\bib{verra}{article}{
   author={Verra, A.},
   title={The Prym map has degree two on plane sextics},
   conference={
      title={The Fano Conference},
   },
   book={
      publisher={Univ. Torino, Turin},
   },
   date={2004},
   pages={735--759},
}

%\bib{vo_book}{article}{	
%    AUTHOR = {Voisin, C.},
%     TITLE = {Hodge theory and complex algebraic geometry. {II}},
%    SERIES = {Cambridge Studies in Advanced Mathematics},
%    VOLUME = {77},
%   EDITION = {English},
%      NOTE = {Translated from the French by Leila Schneps},
% PUBLISHER = {Cambridge University Press, Cambridge},
%      YEAR = {2007},
%     PAGES = {x+351},
%      ISBN = {978-0-521-71802-8},
%}

%\bib{vo_invent}{article}{	
%    AUTHOR = {Voisin, C.},
%     TITLE = {Th\'{e}or\`eme de {T}orelli pour les cubiques de {${\bf P}^5$}},
%   JOURNAL = {Invent. Math.},
%    VOLUME = {86},
%      YEAR = {1986},
%    NUMBER = {3},
%     PAGES = {577--601},
%}

\end{biblist}

\end{bibdiv}

\end{document}